\renewcommand\thesubsection{\@arabic\c@subsection}
\let\wh\widehat
\let\wt\widetilde
\renewcommand{\d}{{\mathrm d}}
\newcommand{\m}{{\mathrm m}}
\renewcommand{\Re}{\operatorname{Re}}
\newtheorem{theorem}{Theorem}
\newtheorem{lemma}{Lemma}
\theoremstyle{remark}
\begin{document}

\title{Short walk adventures}

\date{18 January 2018. \emph{Revised}: 29 April 2018}

\author{Armin Straub}
\address{Department of Mathematics and Statistics, University of South Alabama, 411 University Blvd N, MSPB 325, Mobile, AL 36688, USA}
\email{straub@southalabama.edu}

\author{Wadim Zudilin}
\address{Department of Mathematics, IMAPP, Radboud University, PO Box 9010, 6500~GL Nijmegen, Netherlands}
\email{w.zudilin@math.ru.nl}

\address{School of Mathematical and Physical Sciences, The University of Newcastle, Callag\-han, NSW 2308, Australia}
\email{wadim.zudilin@newcastle.edu.au}

\address{Laboratory of Mirror Symmetry and Automorphic Forms, National Research University Higher School of Economics, 6 Usacheva str., 119048 Moscow, Russia}
\email{wzudilin@gmail.com}

\dedicatory{To the memory of Jon Borwein, who convinced us that a short walk can be adventurous}

\thanks{Both authors acknowledge support of the Max Planck Society during their stays at the
Max Planck Institute for Mathematics (Bonn, Germany) in the years 2015--2017.
The second author is partially supported by Laboratory of Mirror Symmetry NRU HSE, RF government grant, ag.\ no.\ 14.641.31.0001.}

\subjclass[2010]{Primary 11Y60; Secondary 11F03, 11G55, 33C20, 33E05, 33E30, 44A15, 60G50}

\keywords{Uniform random walk; Mahler measure; modular function; modular form; $L$-value; arithmetic differential equation; hypergeometric function}

\begin{abstract}
We review recent development of short uniform random walks, with a focus on its connection to (zeta) Mahler measures and
modular parametrisation of the density functions. Furthermore, we extend available ``probabilistic'' techniques
to cover a variation of random walks and reduce some three-variable Mahler measures, which are conjectured to evaluate in terms of $L$-values of modular forms, to hypergeometric form.
\end{abstract}

\maketitle

\setcounter{section}{-1}
\section{Introduction}
\label{s0}

At some stages of our careers we were approached by Jon Borwein to collaborate on a theme that sounded rather
off topic to us, who had interests in number theory, combinatorics and related special functions. Somewhat unexpectedly, the theme has
become a remarkable research project with several outcomes (including \cite{BNSW11,BSW13,BSWZ12}, to list a few),
a project which we continue to enjoy after the sudden loss of Jon\dots\
This note serves as a summary to our recent discoveries that certain ``probabilistic'' techniques apply usefully to tackling
difficult problems on the border of analysis, number theory and differential equations; in particular, in evaluating
multi-variable Mahler measures. Our principal novelties are given in Theorems \ref{th1}--\ref{th3}; these include
hypergeometric reduction of the Mahler measures of the three-variable polynomials
$$
1+x_1+x_2+x_3+x_2x_3 \quad\text{and}\quad (1+x_1)^2+x_2+x_3,
$$
as well as the (hypergeometric) factorisation of a related differential operator for the Ap\'ery-like sequence
$$
\sum_{k=0}^n{\binom nk}^2{\binom{2k}k}^2,
\quad\text{where}\; n=0,1,2,\dotsc.
$$

Echoing Jon's ``a short walk can be beautiful'' \cite{Bo16}, we add that ``a short walk can be adventurous.''

\section{Uniform random walks}
\label{s1}

An $N$-step uniform random walk is a planar walk that starts at the origin and
consists of $N$ steps of length $1$ each taken into a uniformly random direction.
Let $X_N$ be the distance to the origin after these $N$ steps.
The $s$-th moments $W_N(s)$ of $X_N$ can be computed \cite{BSWZ12} via the formula
\begin{align*}
W_N(s)
&=\idotsint_{[0,1]^N}|e^{2\pi i\theta_1}+\dots+e^{2\pi i\theta_N}|^s\,\d\theta_1\dotsb\d\theta_N
\\
&=\idotsint_{[0,1]^{N-1}}|1+e^{2\pi i\theta_1}+\dots+e^{2\pi i\theta_{N-1}}|^s\,\d\theta_1\dotsb\d\theta_{N-1},
\end{align*}
and are related to the (probability) density function $p_N(x)$ of $X_N$ via
$$
W_N(s)=\int_0^\infty x^sp_N(x)\,\d x
=\int_0^Nx^sp_N(x)\,\d x.
$$
That is, $p_N(x)$ can then be obtained as the inverse Mellin transform of $W_N(s-1)$.
Finally, note that the even moments $W_3(2n)$ and $W_4(2n)$ (which are, clearly, positive integers) can be identified with the odd moments of
$I_0(t)K_0(t)^2$ and $I_0(t)K_0(t)^3$, respectively,
where $I_0(t)$ and $K_0(t)$ denote the modified Bessel functions of the first and second kind. Namely, for $n=1,2,\dots$ we have \cite{BBBG08}
\begin{equation*}
W_3(2n)=\frac{3^{2n+3/2}}{\pi\,2^{2n}\,n!^2}\int_0^\infty t^{2n+1}I_0(t)K_0(t)^2\,\d t
\end{equation*}
and
\begin{equation*}
W_4(2n)=\frac{4^{2n+2}}{\pi^2\,n!^2}\int_0^\infty t^{2n+1}I_0(t)K_0(t)^3\,\d t.
\end{equation*}

\section{Zeta Mahler measures}
\label{s2}

For a non-zero Laurent polynomial $P(x_1,\dots,x_N)\in\mathbb C[x_1^{\pm1},\dots,x_N^{\pm1}]$, its zeta Mahler measure \cite{Ak09} is defined by
$$
Z(P;s)=\idotsint_{[0,1]^N}|P(e^{2\pi i\theta_1},\dots,e^{2\pi i\theta_N})|^s\,\d\theta_1\dotsb\d\theta_N,
$$
and its logarithmic Mahler measure is
$$
\m(P)=\frac{\d Z(P;s)}{\d s}\bigg|_{s=0}=\idotsint_{[0,1]^N}\log|P(e^{2\pi i\theta_1},\dots,e^{2\pi i\theta_N})|\,\d\theta_1\dotsb\d\theta_N.
$$
A straightforward comparison of the two definitions reveals that
$$
W_N(s)=Z(x_1+\dots+x_N;s)=Z(1+x_1+\dots+x_{N-1};s)
$$
and
\begin{equation}
W_N'(0)=\m(x_1+\dots+x_N)=\m(1+x_1+\dots+x_{N-1})=\int_0^Np_N(x)\log x\,\d x,
\label{red0}
\end{equation}
where the derivative is with respect to $s$. The latter Mahler measures are known as linear Mahler measures.
The evaluations
\begin{align*}
W_2'(0)=0, \quad W_3'(0)=L'(\chi_{-3};-1)=\frac{3\sqrt3}{4\pi}\,L(\chi_{-3};2), \quad W_4'(0)=-14\zeta'(-2)=\frac{7\zeta(3)}{2\pi^2},
\end{align*}
are known \cite{Sm81}, while the following conjectural evaluations, due to Rodriguez-Villegas \cite{BLRVD03} and verified to several hundred digits \cite{BB12}, remain open:
\begin{align*}
W_5'(0)&\overset?=-L'(f_3;-1)=6\biggl(\frac{\sqrt{15}}{2\pi}\biggr)^5L(f_3;4),
\\
W_6'(0)&\overset?=-8L'(f_4;-1)=3\biggl(\frac{\sqrt6}{\pi}\biggr)^6L(f_4;5),
\end{align*}
where
$$
f_3(\tau)=\eta(\tau)^3\eta(15\tau)^3+\eta(3\tau)^3\eta(5\tau)^3
\quad\text{and}\quad
f_4(\tau)=\eta(\tau)^2\eta(2\tau)^2\eta(3\tau)^2\eta(6\tau)^2
$$
are cusp eigenforms of weight 3 and 4, respectively.
Here and in what follows, Dedekind's eta function
$$
\eta(\tau)=q^{1/24}\prod_{m=1}^{\infty}(1-q^m)
=\sum_{n=-\infty}^{\infty}(-1)^nq^{(6n+1)^2/24},
\quad\text{where}\; q=e^{2\pi i\tau},
$$
serves as a principal constructor of modular forms and functions.
No similar formulae are known for $W_N'(0)$ when $N\ge7$, though the story continues at a different level\,---\,see
\cite{Br16,Zh18,Zh18b} for details.

\section{Generic two-step random walks}
\label{s2+}

Let $X_1$ and $X_2$ be two (sufficiently nice, independent) random variables on $[0,\infty)$
with probability density $p_1(x)$ and $p_2(x)$, respectively, and
let $\theta_1$ and $\theta_2$ be uniformly distributed on $[0, 1]$.
Then $X = e^{2\pi i\theta_1}X_1 + e^{2\pi i\theta_2}X_2$ describes a two-step random walk
in the plane with a first step of length~$X_1$ and a second step of length~$X_2$.
As in \cite[eq.~(3-3)]{BSW13}, an application of the cosine rule shows that the $s$-th moment of $|X|$ is
\begin{equation*}
W(s) = \mathsf{E}(|X|^s) = \int_0^{\infty}\!\!\!\int_0^{\infty} g_s (x, y) p_1 (x) p_2 (y)\,\d x\,\d y,
\end{equation*}
where
\begin{equation*}
g_s (x, y) = \frac{1}{\pi} \int_0^{\pi} (x^2 + y^2 + 2 x y \cos \theta)^{s/2}\,\d\theta.
\end{equation*}
Observe that
\begin{equation*}
\frac{\d g_s (x, y)}{\d s}\bigg|_{s = 0}
= \frac{1}{\pi} \int_0^{\pi} \log {\textstyle\sqrt{x^2 + y^2 + 2 x y \cos \theta}} \,\d\theta
= \max\{\log |x|, \log |y|\},
\end{equation*}
so that, in particular,

\begin{lemma}
\label{lem1}
We have
\begin{equation*}
W'(0) = \mathsf{E} (\log |X|)
= \int_0^{\infty}\!\!\!\int_0^{\infty} p_1 (x) p_2 (y) \max \{\log x, \log y\} \,\d y\,\d x.
\end{equation*}
\end{lemma}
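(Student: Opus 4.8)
The plan is to start from the representation $W(s)=\int_0^\infty\int_0^\infty g_s(x,y)\,p_1(x)p_2(y)\,\d x\,\d y$ established above, to note that $g_0(x,y)\equiv1$ and hence $W(0)=1$, and to compute the derivative at $s=0$ by differentiating under the double integral sign. Granting the interchange, one obtains
$$
W'(0)=\int_0^\infty\int_0^\infty\frac{\partial g_s(x,y)}{\partial s}\bigg|_{s=0}p_1(x)p_2(y)\,\d x\,\d y,
$$
and then the displayed computation preceding the lemma, namely $\partial_s g_s|_{s=0}=\max\{\log|x|,\log|y|\}$, immediately yields the claimed formula (with $|x|=x$ and $|y|=y$ on the support $[0,\infty)$ of $p_1$ and $p_2$). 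Thus the entire content of the statement is the legitimacy of differentiating under the integral sign; everything else is bookkeeping.

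First I would rewrite the difference quotient as
$$
\frac{W(s)-W(0)}{s}=\frac1{\pi}\int_0^\infty\int_0^\infty\Bigl(\int_0^\pi\frac{a^{s/2}-1}{s}\,\d\theta\Bigr)p_1(x)p_2(y)\,\d x\,\d y,\quad a=a(x,y,\theta)=x^2+y^2+2xy\cos\theta,
$$
and verify by dominated convergence that the limit $s\to0$ may be taken inside. The pointwise limit of the inner integrand is $\tfrac12\log a$, whose $\theta$-average is $\max\{\log x,\log y\}$ by the computation already recorded. For the majorant I would invoke the elementary inequality $|e^u-1|\le|u|\,e^{|u|}$ with $u=\tfrac12 s\log a$, giving, for all $|s|\le s_0$,
$$
\Bigl|\frac{a^{s/2}-1}{s}\Bigr|\le\tfrac12|\log a|\,\max\{a^{s_0/2},a^{-s_0/2}\}.
$$
Since $|x-y|\le\sqrt a\le x+y$, the $\theta$-integral of this bound converges (its only singularity, at $\theta=\pi$ when $x=y$, is integrable for $s_0<1$) and is controlled by an expression built from $(x+y)^{\pm s_0}$ and $1+|\log x|+|\log y|$.

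The hard part will be to show that this majorant is integrable against $p_1(x)p_2(y)$, and it is precisely here that the ``sufficiently nice'' hypothesis on $X_1,X_2$ is consumed: one needs finiteness of the moments $\mathsf E(X_i^{\pm s_0})$ for some $s_0\in(0,1)$ together with the logarithmic moments $\mathsf E(|\log X_i|)$, which guarantee $\int_0^\infty\int_0^\infty h\,p_1p_2<\infty$ for the dominating function $h$ and, incidentally, that $W(s)$ is itself finite and differentiable on a neighbourhood of $s=0$. Once the majorant is in hand, dominated convergence delivers the interchange and the lemma follows. I expect the verification of this integrability\,---\,especially near $x,y\to0^+$ and $x,y\to\infty$, where the logarithmic factor competes with the tails of the densities\,---\,to be the only genuinely delicate point, and to be handled cleanly under whatever decay and moment assumptions the phrase ``sufficiently nice'' is taken to encode.
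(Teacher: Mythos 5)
Your proposal follows exactly the paper's route: the authors derive Lemma~\ref{lem1} directly from the representation $W(s)=\iint g_s(x,y)\,p_1(x)p_2(y)\,\d x\,\d y$ and the displayed identity $\frac{\d g_s}{\d s}\big|_{s=0}=\max\{\log|x|,\log|y|\}$, differentiating under the integral sign with no further justification, since the ``sufficiently nice'' hypothesis is taken to license the interchange. Your dominated-convergence argument (majorant from $|e^u-1|\le|u|e^{|u|}$, moment conditions $\mathsf{E}(X_i^{\pm s_0})<\infty$) merely fills in the detail the paper deliberately leaves implicit, and is correct.
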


Alternative equivalent expressions, that will be useful in what follows, include
\begin{align}
\mathsf{E} (\log |X|)
& = \int_0^{\infty}\!\!\!\int_0^x p_1 (x) p_2 (y) \log x \,\d y\,\d x
+ \int_0^{\infty}\!\!\!\int_x^{\infty} p_1 (x) p_2 (y) \log y \,\d y\,\d x
\nonumber\\
& = \mathsf{E} (\log X_1) + \int_0^{\infty}\!\!\!\int_x^{\infty} p_1 (x) p_2 (y)  (\log y - \log x) \,\d y\,\d x
\nonumber\\\label{eq:Wp0:p1p2}
& = \mathsf{E} (\log X_2) + \int_0^{\infty}\!\!\!\int_0^x p_1 (x) p_2 (y) (\log x - \log y) \,\d y\,\d x.
\end{align}

\section{Linear Mahler measures}
\label{s2a}

Let $N,M$ be integers such that $N>M>0$.
By decomposing an $N$-step random walk into two walks with $N-M$ and $M$ steps, and
applying Lemma~\ref{lem1} in the form \eqref{eq:Wp0:p1p2}, we find that
\begin{equation*}
W_N' (0) = W_M' (0) + \int_0^{N - M} p_{N - M} (x) \biggl( \int_0^x p_M (y) (\log x - \log y) \,\d y \biggr) \,\d x.
\end{equation*}
This formula, together with known formulae for the densities \cite{BSWZ12}, like $p_1(x)=\delta(x-1)$ (the Dirac delta function) and
$p_2(x)=2/(\pi\sqrt{4-x^2})$ for $0<x<2$, allows one to produce new expressions for linear Mahler measures. Indeed, taking $M=1$ we get
\begin{equation}
W_N'(0)
=\int_1^{N-1}p_{N-1}(x)\log x\,\d x
\label{red1}
\end{equation}
(which can be also derived using Jensen's formula), while $M=2$ results in
\begin{equation}
W_N'(0)
=\int_2^{N-2}p_{N-2}(x)\log x\,\d x
+\frac1\pi\int_0^2p_{N-2}(x)x\cdot{}_3F_2\biggl(\begin{matrix} \tfrac12, \, \tfrac12, \, \tfrac12 \\ \tfrac32, \, \tfrac32 \end{matrix}\biggm| \frac{x^2}{4} \biggr)\,\d x
\label{red2}
\end{equation}
(see also \cite[eq.~(2.1)]{Ro06}).
Here, and in what follows, the hypergeometric notation
$$
{}_mF_{m-1}\biggl(\begin{matrix}
a_1, \, a_2, \, \dots, \, a_m \\
\phantom{a_1,} \, b_2, \, \dots, \, b_m \end{matrix} \biggm|z\biggr)
=\sum_{n=0}^\infty
\frac{(a_1)_n(a_2)_n\dotsb(a_m)_n}
{(b_2)_n\dotsb(b_m)_n}\,
\frac{z^n}{n!}
$$
is used, where
$$
(a)_n=\frac{\Gamma(a+n)}{\Gamma(a)}=\begin{cases}
a(a+1)\dotsb(a+n-1), &\text{for $n\ge1$}, \\
1, &\text{for $n=0$},
\end{cases}
$$
denotes the Pochhammer symbol (the rising factorial).
Note that we deduce \eqref{red2} from
$$
\int_0^x p_2(y) (\log x - \log y)\,\d y
= \frac{x}\pi\cdot{}_3F_2\biggl(\begin{matrix} \tfrac12, \, \tfrac12, \, \tfrac12 \\ \tfrac32, \, \tfrac32 \end{matrix}\biggm| \frac{x^2}{4} \biggr),
$$
which is valid if $0\le x\le 2$.

Equations \eqref{red1} and \eqref{red2} and the formula
$$
p_4(x)=\frac{2\sqrt{16-x^2}}{\pi^2x}\,
\Re{}_3F_2\biggl(\begin{matrix} \tfrac12, \, \tfrac12, \, \tfrac12 \\ \tfrac56, \, \tfrac76 \end{matrix}\biggm| \frac{(16-x^2)^3}{108x^4} \biggr)
$$
obtained in \cite[Theorem 4.9]{BSWZ12}, provide the formulae
\begin{align*}
W_5'(0)&=\frac{7\zeta(3)}{2\pi^2}
-\frac1{\pi^2}\int_0^1{\textstyle\sqrt{16-x^2}}
\Re{}_3F_2\biggl(\begin{matrix} \tfrac12, \, \tfrac12, \, \tfrac12 \\ \tfrac56, \, \tfrac76 \end{matrix}\biggm| \frac{(16-x^2)^3}{108x^4} \biggr)\,\d(\log^2x)
\\ \intertext{and}
W_6'(0)&=\frac{7\zeta(3)}{2\pi^2}
-\frac1{\pi^2}\int_0^2{\textstyle\sqrt{16-x^2}}
\Re{}_3F_2\biggl(\begin{matrix} \tfrac12, \, \tfrac12, \, \tfrac12 \\ \tfrac56, \, \tfrac76 \end{matrix}\biggm| \frac{(16-x^2)^3}{108x^4} \biggr)\,\d(\log^2x)
\\ &\qquad
+\frac2{\pi^3}\int_0^2{\textstyle\sqrt{16-x^2}}
\Re{}_3F_2\biggl(\begin{matrix} \tfrac12, \, \tfrac12, \, \tfrac12 \\ \tfrac56, \, \tfrac76 \end{matrix}\biggm| \frac{(16-x^2)^3}{108x^4} \biggr)
\cdot{}_3F_2\biggl(\begin{matrix} \tfrac12, \, \tfrac12, \, \tfrac12 \\ \tfrac32, \, \tfrac32 \end{matrix}\biggm| \frac{x^2}{4} \biggr)\,\d x.
\end{align*}
These \emph{single} integrals can be used to numerically confirm the conjectural evaluations of $W_5'(0)$ and $W_6'(0)$.

A similar application of Lemma~\ref{lem1}, upon decomposing a $6$-step walk into two walks with $3$ steps, yields the alternative reduction
\begin{equation}
W_6'(0)
=2\int_0^3p_3(x)\log x\biggl(\int_0^xp_3(y)\,\d y\biggr)\,\d x,
\label{W6p3}
\end{equation}
where \cite{BSWZ12}
$$
p_3(x)=\frac{2\sqrt3x}{\pi(3+x^2)}\cdot{}_2F_1\biggl(\begin{matrix} \tfrac13, \, \tfrac23 \\ 1 \end{matrix}\biggm| \frac{x^2(9-x^2)^2}{(3+x^2)^3} \biggr).
$$
We discuss this formula further in Section~\ref{s2a+}.

Finally, we mention that equation \eqref{red1} and a modular parametrisation of $p_4(x)$ (which we indicate in Section~\ref{s2b})
were independently cast in \cite{SV14} to produce a double $L$-value expression for $W_5'(0)$.

\section{Modular parametrisation of $p_3(x)$ and related formulae}
\label{s2a+}

Note that formula \eqref{W6p3} can be written as
\begin{equation*}
W_6'(0)
=\int_0^3\log x\,\d(P_3(x)^2)
=\log3-\int_0^3P_3(x)^2\,\frac{\d x}x,
\end{equation*}
featuring the cumulative density function
$$
P_3(x)=\int_0^xp_3(y)\,\d y.
$$
The related modular parametrisation of $p_3(x)$ is given by
$$
x=x(\tau)=3\frac{\eta(\tau)^2\eta(6\tau)^4}{\eta(2\tau)^4\eta(3\tau)^2}
\colon(i\infty,0)\to(0,3),
$$
so that
$$
p_3(x)=\frac{2\sqrt3}{\pi}\,\frac{\eta(2\tau)^2\eta(6\tau)^2}{\eta(\tau)\eta(3\tau)},
\quad
\d x=3\pi i\,\frac{\eta(\tau)^6\eta(3\tau)^2\eta(6\tau)^2}{\eta(2\tau)^6}\,\d\tau
$$
and
$$
P_3(x)=6i\sqrt3\int_{i\infty}^\tau\frac{\eta(\tau)^5\eta(3\tau)\eta(6\tau)^4}{\eta(2\tau)^4}\,\d\tau
$$
is the anti-derivative of a weight 3 holomorphic Eisenstein series
$$
\frac{\eta(\tau)^5\eta(3\tau)\eta(6\tau)^4}{\eta(2\tau)^4}
=E_{3,\chi_{-3}}(\tau)-8E_{3,\chi_{-3}}(2\tau),
$$
where
$$
E_{3,\chi_{-3}}(\tau)=\frac{\eta(3\tau)^9}{\eta(\tau)^3}=\sum_{m,n=1}^\infty\biggl(\frac{-3}m\biggr)n^2q^{mn},
\quad \chi_{-3}(m)=\biggl(\frac{-3}m\biggr)=\frac{e^{2\pi im/3}-e^{-2\pi im/3}}{i\sqrt3}.
$$
Though the anti-derivative $P_3(x)$,
\begin{align*}
P_3(x)
&=\frac{3\sqrt3}\pi\biggl(\sum_{m,n=1}^\infty\biggl(\frac{-3}m\biggr)\frac nm\,q^{mn}
-4\sum_{m,n=1}^\infty\biggl(\frac{-3}m\biggr)\frac nm\,q^{2mn}\biggr)
\\
&=\frac{9i}\pi\,\log\prod_{n=1}^\infty\biggl(\frac{(1-e^{2\pi i/3}q^{2n})^4(1-e^{-2\pi i/3}q^n)}{(1-e^{-2\pi i/3}q^{2n})^4(1-e^{2\pi i/3}q^n)}\biggr)^n,
\end{align*}
is not considered to be sufficiently ``natural'',
it shows up as the elliptic dilogarithm thanks to Bloch's formula; see \cite{DI07,PZ18} for the details.
Note that
$$
E_{3,\chi_{-3}}\biggl(-\frac1{3\tau}\biggr)=\frac{i\tau^3}{3\sqrt3}\,\tilde E_{3,\chi_{-3}}(\tau),
\quad
\tilde E_{3,\chi_{-3}}(\tau)
=\frac{\eta(\tau)^9}{\eta(3\tau)^3}=1-9\sum_{m,n=1}^\infty\biggl(\frac{-3}n\biggr)n^2q^{mn};
$$
and, in addition, we have
\begin{align*}
\frac1{2\pi i}\,\frac{\d x/\d\tau}{x}
=\frac12\biggl(\frac{\eta(\tau)^2\eta(3\tau)^2}{\eta(2\tau)\eta(6\tau)}\biggr)^2
&=\frac1{18}\bigl(E_{1,\chi_{-3}}(\tau)-4E_{1,\chi_{-3}}(4\tau)\bigr)^2
\\
&=\frac1{54\tau^2}\biggl(E_{1,\chi_{-3}}\biggl(-\frac1{12\tau}\biggr)-E_{1,\chi_{-3}}\biggl(-\frac1{3\tau}\biggr)\biggr)^2,
\end{align*}
where
$$
E_{1,\chi_{-3}}(\tau)=1+6\sum_{m,n=1}^\infty\biggl(\frac{-3}m\biggr)q^{mn}.
$$

\section{Modular computation for $W_5'(0)$ and $W_6'(0)$}
\label{s2b}

As (partly) shown in \cite{BSWZ12} the density $p_4(x)$ can be parameterised as follows (we make a shift of $\tau$ by half):
$$
p_4(x(\tau))=-\Re\biggl(\frac{2i(1+6\tau+12\tau^2)}\pi\,p(\tau)\biggr),
$$
where
$$
p(\tau)=\frac{\eta(2\tau)^4\eta(6\tau)^4}{\eta(\tau)\eta(3\tau)\eta(4\tau)\eta(12\tau)}
\quad\text{and}\quad
x(\tau)=\biggl(\frac{2\eta(\tau)\eta(3\tau)\eta(4\tau)\eta(12\tau)}{\eta(2\tau)^2\eta(6\tau)^2}\biggr)^3.
$$
The path for $\tau$ along the imaginary axis from 0 to $i/(2\sqrt3)$ (or from $i\infty$ to $i/(2\sqrt3)$) corresponds to $x$ ranging from $0$ to $2$,
while the path from $i/(2\sqrt3)$ to $-1/4+i/(4\sqrt3)$ along the arc centred at 0 corresponds to the real range $(2,4)$ for~$x$.
(The arc admits the parametrisation $\tau=e^{\pi i\theta}/(2\sqrt3)$, $1/2<\theta<5/6$.)
Note that $x(i/(2\sqrt{15}))=1$ and
$$
p_4(x(\tau))=\begin{cases}
-\dfrac{2i\cdot6\tau}\pi\,p(\tau), &\text{for $\tau$ on the imaginary axis}, \\[6.5pt]
-\dfrac{2i(1+6\tau+12\tau^2)}\pi\,p(\tau), &\text{for $\tau$ on the arc},
\end{cases}
$$
and
$$
-\frac{2i(1+6\tau+12\tau^2)}\pi\,p(\tau)
=\frac{2\sqrt{16-x^2}}{\pi^2x}\cdot{}_3F_2\biggl(\begin{matrix} \tfrac12, \, \tfrac12, \, \tfrac12 \\ \tfrac56, \, \tfrac76 \end{matrix}\biggm| \frac{(16-x^2)^3}{108x^4} \biggr)
$$
(this is a general form of \cite[Theorem 4.9]{BSWZ12}).
Formulas \eqref{red0}, \eqref{red1} and \eqref{red2} reduce the conjectural evaluations of $W_5'(0)$ and $W_6'(0)$ to the following ones:
$$
\frac{7\zeta(3)}{2\pi^2}+L'(f_3;-1)
\overset?=\frac{12}\pi\int_0^{1/(2\sqrt{15})}yp(iy)\log x(iy)\,\d x(iy)
$$
and
\begin{align*}
&
\frac{7\zeta(3)}{2\pi^2}+8L'(f_4;-1)
\overset?=\frac{12}\pi\int_0^{1/(2\sqrt3)}yp(iy)\log x(iy)\,\d x(iy)
\\ &\qquad
-\frac{12}{\pi^2}\int_0^{1/(2\sqrt3)}yp(iy)x(iy)
\cdot{}_3F_2\biggl(\begin{matrix} \tfrac12, \, \tfrac12, \, \tfrac12 \\ \tfrac32, \, \tfrac32 \end{matrix}\biggm| \frac{x(iy)^2}{4} \biggr)\,\d x(iy).
\end{align*}

Furthermore, note that the Atkin--Lehner involutions $w_{12}\colon\tau\mapsto-1/(12\tau)$ and $w_6\colon\allowbreak\tau\mapsto(6\tau-5)/(12\tau-6)$
act on the modular function $x(\tau)$ as follows: $x(w_{12}\tau)=x(\tau)$ and $x(w_6\tau)=-8/x(\tau)$, and we also have
$p(w_{12}\tau)=-\tau^2p(\tau)$. The point $i/(2\sqrt3)$ is fixed by~$w_{12}$.
Thus, the change of variable $y\mapsto1/(12y)$ leads to
$$
\int_0^{1/(2\sqrt3)}yp(iy)\log x(iy)\,\d x(iy)
=-\int_{1/(2\sqrt3)}^\infty yp(iy)\log x(iy)\,\d x(iy).
$$

\section{Mahler measures related to a variation of random walk}
\label{s4}

In \cite{SV14} the Mahler measures $\m(1+x_1+x_2)$ and $\m(1+x_1+x_2+x_3)$ are computed using the modular parametrisations of
$$
\sum_{n=0}^\infty W_3(2n)z^n=\sum_{n=0}^\infty\operatorname{CT}\bigl((1+x_1+x_2)(1+x_1^{-1}+x_2^{-1})\bigr)^nz^n
$$
and
$$
\sum_{n=0}^\infty W_4(2n)z^n=\sum_{n=0}^\infty\operatorname{CT}\bigl((1+x_1+x_2+x_3)(1+x_1^{-1}+x_2^{-1}+x_3^{-1})\bigr)^nz^n,
$$
where $\operatorname{CT}(L)$ denotes the constant term of a Laurent polynomial $L\in\mathbb{Z}[x_1^\pm,x_2^\pm,\ldots]$.
Note that the Picard--Fuchs linear differential equations for the two generating functions give rise to the ones for the densities
$p_3(x)$ and $p_4(x)$ together with their explicit hypergeometric and modular expressions (see \cite[eq.~(3.2) and Remark~4.10]{BSWZ12}),
though it remains unclear whether the latter information can be used to compute $W_N'(0)$ in \eqref{red0}
for $N=3,4$. This is itself an interesting question to not only assist in computing of $W_N'(0)$ for $N>4$ but also in relation with
another famous conjecture of Boyd:
\begin{equation}
\m(1+x_1+x_2+x_3+x_2x_3)\overset?=-2L'(f_2;-1)=\frac{15^2}{4\pi^4}L(f_2;3)=0.4839979734\hdots,
\label{eq:L15}
\end{equation}
where $f_2(\tau)=\eta(\tau)\eta(3\tau)\eta(5\tau)\eta(15\tau)$.

In analogy with the case of linear Mahler measures, we define
\begin{align*}
\wt W(s)
&=\iiint_{[0,1]^3}|1+e^{2\pi i\theta_1}+e^{2\pi i\theta_2}+e^{2\pi i\theta_3}+e^{2\pi i(\theta_2+\theta_3)}|^s\,\d \theta_1\,\d \theta_2\,\d \theta_3
\\
&=Z(1+x_1+x_2+x_3+x_2x_3;s)
\end{align*}
as the $s$-th moment of a random 5-step walk for which the direction of the
final step is completely determined by the two previous steps.
Then the even moments
\begin{align*}
\wt W(2n)
&=\operatorname{CT}\bigl((1+x_1+x_2+x_3+x_2x_3)(1+x_1^{-1}+x_2^{-1}+x_3^{-1}+(x_2x_3)^{-1})\bigr)^n
\\
&=\sum_{k=0}^n{\binom nk}^2{\binom{2k}k}^2
\end{align*}
satisfy a rather lengthy recurrence equation, which is equivalent to a Picard--Fuchs differential equation of order~4.
The latter splits into the tensor product of two differential equations of order~2 and, with some effort, we obtain
the following result.

\begin{theorem}
\label{th1}
We have
\begin{align*}
&
\sum_{n=0}^\infty\wt W(2n)\biggl(\frac t{(4+t)(1+4t)}\biggr)^n
\\ &\quad
=\frac{(4+t)(1+4t)}{4(1+4t+t^2)}
\,{}_2F_1\biggl(\begin{matrix} \tfrac12, \, \tfrac12 \\ 1 \end{matrix}\biggm| \frac{t(4+t)}{1+4t+t^2} \biggr)
\cdot{}_2F_1\biggl(\begin{matrix} \tfrac12, \, \tfrac12 \\ 1 \end{matrix}\biggm| \frac{t^2}{1+4t+t^2} \biggr)
\end{align*}
and, more generally,
\begin{align*}
&
\frac b{(b+t)(1+bt)}\sum_{n=0}^\infty\biggl(\frac t{(b+t)(1+bt)}\biggr)^n\sum_{k=0}^n{\binom nk}^2{\binom{2k}k}^2\biggl(\frac b4\biggr)^{2k}
\\ &\quad
={}_2F_1\biggl(\begin{matrix} \tfrac12, \, \tfrac12 \\ 1 \end{matrix}\biggm| -t(b+t) \biggr)
\cdot\frac1{(1+bt)^{1/2}}\,{}_2F_1\biggl(\begin{matrix} \tfrac12, \, \tfrac12 \\ 1 \end{matrix}\biggm| -\frac{t^2}{1+bt} \biggr)
\\ &\quad
=\frac1{1+bt+t^2}\,{}_2F_1\biggl(\begin{matrix} \tfrac12, \, \tfrac12 \\ 1 \end{matrix}\biggm| \frac{t(b+t)}{1+bt+t^2} \biggr)
\cdot{}_2F_1\biggl(\begin{matrix} \tfrac12, \, \tfrac12 \\ 1 \end{matrix}\biggm| \frac{t^2}{1+bt+t^2} \biggr).
\end{align*}
\end{theorem}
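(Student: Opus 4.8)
The plan is to handle the two products of hypergeometric functions on the right-hand side first, and only then connect either of them to the generating function on the left. The equality of the second and third expressions is purely formal: it is a double application of Pfaff's transformation ${}_2F_1(\tfrac12,\tfrac12;1;w)=(1-w)^{-1/2}\,{}_2F_1(\tfrac12,\tfrac12;1;\tfrac{w}{w-1})$. Indeed, since
$$
1-\frac{t(b+t)}{1+bt+t^2}=\frac1{1+bt+t^2}, \qquad 1-\frac{t^2}{1+bt+t^2}=\frac{1+bt}{1+bt+t^2},
$$
the two arguments $w=\tfrac{t(b+t)}{1+bt+t^2}$ and $w=\tfrac{t^2}{1+bt+t^2}$ are carried to $-t(b+t)$ and $-t^2/(1+bt)$, respectively, while the prefactors $(1-w)^{-1/2}$ combine with the factor $\tfrac1{1+bt+t^2}$ in front to leave exactly $(1+bt)^{-1/2}$. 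Hence it suffices to prove the first equality, namely that the generating function on the left equals
$$
{}_2F_1\biggl(\begin{matrix}\tfrac12,\,\tfrac12\\1\end{matrix}\biggm|-t(b+t)\biggr)\cdot\frac1{(1+bt)^{1/2}}\,{}_2F_1\biggl(\begin{matrix}\tfrac12,\,\tfrac12\\1\end{matrix}\biggm|-\frac{t^2}{1+bt}\biggr).
$$

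For this remaining identity I would show that both sides satisfy the same order-$4$ linear differential equation in $t$ and then match a short initial segment of their Taylor expansions. Writing $A_b(n)=\sum_k\binom nk^2\binom{2k}k^2(b/4)^{2k}$, creative telescoping (Zeilberger's algorithm), exactly as for $\wt W(2n)=A_4(n)$, produces the announced order-$4$ recurrence for $A_b(n)$, equivalently an order-$4$ operator $\mathcal L_z$ annihilating $\sum_n A_b(n)z^n$. Conjugating $\mathcal L_z$ by the prefactor $b/((b+t)(1+bt))$ and performing the rational change of variable $z=t/((b+t)(1+bt))$ turns it into an order-$4$ operator $\mathcal L_t$ with coefficients in $\mathbb Q(b,t)$ that annihilates the left-hand side.

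On the right, each factor ${}_2F_1(\tfrac12,\tfrac12;1;w)$ is annihilated by the Gauss operator $w(1-w)\partial_w^2+(1-2w)\partial_w-\tfrac14$. Pulling this back along the polynomial substitution $w=-t(b+t)$, and along $w=-t^2/(1+bt)$ followed by the gauge twist by $(1+bt)^{-1/2}$, yields two order-$2$ operators $\mathcal A_t,\mathcal B_t\in\mathbb Q(b,t)[\partial_t]$. A product of a solution of $\mathcal A_t$ with a solution of $\mathcal B_t$ is annihilated by the tensor product $\mathcal A_t\otimes\mathcal B_t$, an operator of order~$4$. The crux of the proof, and the step I expect to be the main obstacle, is to verify the operator identity $\mathcal L_t=\mathcal A_t\otimes\mathcal B_t$ (up to a left factor in $\mathbb Q(b,t)$); this is precisely the promised factorisation of the Picard--Fuchs operator into a tensor product. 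Although it is a mechanical computation with rational functions of $t$ carrying the parameter $b$, it is lengthy, and isolating the correct algebraic change of variables is what the phrase ``with some effort'' refers to.

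Once the operator identity is established, it remains to pin down the solution. Both sides are holomorphic at $t=0$ with value~$1$ (the prefactor equals $b/b=1$ there, and $A_b(0)=1$), and a short computation gives the common expansion $1-\tfrac{3b}4\,t+\cdots$ for each. Since $t=0$ is a regular singular point at which $\mathcal L_t$ admits only a one-dimensional space of solutions holomorphic at the origin (the exponents all being $0$, with the holomorphic branch a single product of the two non-logarithmic local solutions), equality of the annihilating operators together with equality of the leading coefficients forces the two sides to coincide; if desired one simply matches a few further Taylor coefficients. Finally, the first displayed identity of the theorem is the specialisation $b=4$, where $(b/4)^{2k}=1$ and $A_4(n)=\wt W(2n)$, multiplied through by $(4+t)(1+4t)/4$.
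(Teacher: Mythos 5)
Your proposal is correct in outline but takes a genuinely different route from the paper. The paper's proof is not an ODE verification: it \emph{derives} the factorisation from a known identity. Summing on $k$ first in $\sum_{n\ge0}z^n\sum_k\binom nk^2\binom{2k}k^2x^k$, recognising the inner sum over $m$ as ${}_2F_1(k+1,k+1;1;z)$, and applying a Pfaff transformation to turn that into a Legendre polynomial, the authors rewrite the double sum as
$$
\frac1{1-z}\sum_{k=0}^\infty\binom{2k}k^2\biggl(\frac{xz}{1-z}\biggr)^kP_k\biggl(\frac{1+z}{1-z}\biggr),
$$
which is a special case of the Bailey--Brafman generating-function formula \cite{CWZ13,Zu18}; that formula yields the asserted product of two ${}_2F_1$'s directly upon substituting $z=t/((b+t)(1+bt))$ and $x=(b/4)^2$. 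This buys a conceptual explanation of where the factorisation comes from, and it powers the generalisations listed right after the proof (via the general Bailey--Brafman formula and its extension in \cite{WZ12}); your route is instead self-contained modulo heavy computer algebra. The paper explicitly concedes your route is viable (``once a factorisation of this type is written down, it is a computational routine to prove it''), and your Pfaff computation identifying the second and third right-hand sides is correct, as is the specialisation $b=4$ and the expansion check $1-\tfrac{3b}4t+\cdots$.

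One step of your endgame is stated incorrectly, though it is repairable. At $t=0$ the space of solutions of the product operator that are holomorphic is \emph{not} one-dimensional in general: taking Frobenius bases $f_1$, $f_2=f_1\log t+(\text{hol.})$ for $\mathcal A_t$ and $g_1$, $g_2=2g_1\log t+(\text{hol.})$ for $\mathcal B_t$ (the factor $2$ because $w=-t^2/(1+bt)$ is a double cover at $t=0$), the combination $2f_2g_1-f_1g_2$ is a second solution holomorphic at the origin, the logarithms cancelling. So equality of the annihilating operators together with agreement of the value at $t=0$ does not by itself force the two sides to coincide. Your fallback of matching further Taylor coefficients does work, but to make it rigorous you should add the standard effectivity argument: the operator $\mathcal L_t$ induces a linear recurrence on the Taylor coefficients of any solution holomorphic at $0$, whose leading coefficient vanishes at only finitely many explicitly computable nonnegative indices, so matching coefficients beyond the largest such index determines the solution uniquely. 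With that routine addition, your argument is complete.
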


\begin{proof}
Once a factorisation of this type is written down, it is a computational routine to prove it. In other words, a principal issue
is discovering such a formula rather than proving it. Our original discovery of Theorem~\ref{th1} involved a lot of experimental
mathematics; however, we later realised that it is deducible from known formulae as follows:
\begin{align*}
\sum_{n=0}^\infty z^n\sum_{k=0}^n{\binom nk}^2{\binom{2k}k}^2x^k
&=\sum_{k=0}^\infty{\binom{2k}k}^2x^k\sum_{m=0}^\infty{\binom{k+m}k}^2z^{k+m}
\\
&=\sum_{k=0}^\infty{\binom{2k}k}^2(xz)^k\,{}_2F_1\biggl(\begin{matrix} k+1, \, k+1 \\ 1 \end{matrix}\biggm| z \biggr)
\\
&=\sum_{k=0}^\infty{\binom{2k}k}^2\frac{(xz)^k}{(1-z)^{k+1}}\,{}_2F_1\biggl(\begin{matrix} -k, \, k+1 \\ 1 \end{matrix}\biggm| -\frac z{1-z} \biggr)
\\
&=\frac1{1-z}\sum_{k=0}^\infty{\binom{2k}k}^2\biggl(\frac{xz}{1-z}\biggr)^k\cdot P_k\biggl(\frac{1+z}{1-z}\biggr),
\end{align*}
where $P_k$ denotes the $k$-th Legendre polynomial, and the latter generating function is a particular instance
of the Bailey--Brafman formula \cite{CWZ13,Zu18}.
\end{proof}

We remark that, using the general Bailey--Brafman formula and its generalisation from~\cite{WZ12}, the proof above extends to the factorisation of the two-variable generating functions
$$
\sum_{n=0}^\infty z^n\sum_{k=0}^n{\binom nk}^2\frac{(s)_k(1-s)_k}{k!^2}\,x^k
$$
as well as of
$$
\sum_{n=0}^\infty z^n\sum_k{\binom n{2k}}^2{\binom{2k}k}^2x^k
\quad\text{and}\quad
\sum_{n=0}^\infty z^n\sum_k{\binom n{3k}}^2\frac{(3k)!}{k!^3}\,x^k,
$$
and even of
$$
\sum_{n=0}^\infty z^n\sum_{k=0}^n{\binom nk}^2u_k\,x^k
$$
for an Ap\'ery-like sequence $u_0,u_1,u_2,\dots$\,.

\medskip
Furthermore, we expect that Theorem \ref{th1} can lead to a hypergeometric expression for the density function $\wt p(x)$
(piecewise analytic, with finite support on the interval $0<x<5$), which is
the inverse Mellin transform of $\wt W(s-1)$, hence to the Mahler measure evaluation
$$
\m(1+x_1+x_2+x_3+x_2x_3)
=\wt W'(0)=\int_0^\infty\wt p(x)\log x\,\d x=\int_0^5\wt p(x)\log x\,\d x.
$$

On the other hand, the reduction technique of Sections~\ref{s2+} and~\ref{s2a} suggests a different approach to computing $\wt W'(0)$,
resulting in the following hypergeometric evaluation of the Mahler measure.

\begin{theorem}
\label{th2}
We have
$$
\m(1+x_1+x_2+x_3+x_2x_3)
=-\frac1{2\pi}\int_0^1{}_2F_1\biggl(\begin{matrix} \tfrac12, \, \tfrac12 \\ 1 \end{matrix}\biggm| 1-\frac{x^2}{16} \biggr)\log x\,\d x.
$$
\end{theorem}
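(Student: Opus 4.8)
The plan is to start from the factorisation
$$
1+x_1+x_2+x_3+x_2x_3=x_1+(1+x_2)(1+x_3),
$$
and to read the right-hand side as a two-step walk whose first step $x_1=e^{2\pi i\theta_1}$ has length $1$ and uniformly random direction, and whose second step $(1+x_2)(1+x_3)$ has length $Y=|1+x_2|\,|1+x_3|$. First I would integrate out $\theta_1$: since this direction is uniform and independent, Jensen's formula gives $\int_0^1\log|e^{2\pi i\theta_1}+w|\,\d\theta_1=\log^+|w|$ for any fixed $w$, and taking $w=(1+x_2)(1+x_3)$ (equivalently, applying Lemma~\ref{lem1} with $p_1(x)=\delta(x-1)$) reduces the Mahler measure to
$$
\m(1+x_1+x_2+x_3+x_2x_3)=\mathsf{E}\bigl(\log^+Y\bigr).
$$
Here each factor $|1+x_2|=2|\cos\pi\theta_2|$ is distributed with the two-step density $p_2(x)=2/(\pi\sqrt{4-x^2})$ on $(0,2)$, so $Y$ is a product of two independent such variables and ranges over $(0,4)$.

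The technical heart is to compute the density $p_Y$ of this product. Writing it as a multiplicative convolution,
$$
p_Y(y)=\frac4{\pi^2}\int_{y/2}^2\frac{\d u}{\sqrt{(4-u^2)(4u^2-y^2)}},
$$
and substituting $w=u^2$ turns this into the elliptic integral $\frac2{\pi^2}\int_{y^2/4}^4\d w\,[w(4-w)(4w-y^2)]^{-1/2}$, taken between the two larger roots $y^2/4$ and $4$ of the cubic. The standard reduction (for instance via $w=4-(4-y^2/4)\sin^2\phi$) evaluates it to a complete elliptic integral of modulus $k^2=1-y^2/16$, and hence
$$
p_Y(y)=\frac1{2\pi}\,{}_2F_1\biggl(\begin{matrix}\tfrac12,\,\tfrac12\\ 1\end{matrix}\biggm|1-\frac{y^2}{16}\biggr).
$$
I expect this elliptic-integral evaluation, and in particular getting the modulus right, to be the main obstacle, although it is routine once the integral has been set up.

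Finally I would remove the $\log^+$. Because each factor of $Y$ has vanishing mean logarithm, $\mathsf{E}(\log|1+x_2|)=\m(1+x)=W_2'(0)=0$, the full logarithmic mean vanishes: $\mathsf{E}(\log Y)=\int_0^4 p_Y(y)\log y\,\d y=0$. Splitting this at $y=1$ and using $\log^+y=\log y$ on $(1,4)$ gives
$$
\mathsf{E}(\log^+Y)=\int_1^4 p_Y(y)\log y\,\d y=-\int_0^1 p_Y(y)\log y\,\d y,
$$
which is exactly the asserted formula once $y$ is renamed $x$ and $p_Y$ is substituted.
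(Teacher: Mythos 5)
Your proposal is correct and follows essentially the same route as the paper: the same decomposition $x_1+(1+x_2)(1+x_3)$, the same application of Lemma~\ref{lem1} (equivalently Jensen's formula) with a unit-length first step, and the same final flip of the integral via $\mathsf{E}(\log Y)=\m\bigl((1+x_2)(1+x_3)\bigr)=0$. The only difference is cosmetic: you obtain the density $\wh p(x)=\frac1{2\pi}\,{}_2F_1\bigl(\tfrac12,\tfrac12;1\bigm|1-\tfrac{x^2}{16}\bigr)$ by a multiplicative convolution reduced to a complete elliptic integral (your modulus $k^2=1-y^2/16$ checks out against the standard cubic reduction), whereas the paper inverts the Mellin transform of $\wh W(s)=W_2(s)^2=\Gamma(1+s)^2/\Gamma(1+s/2)^4$.
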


\begin{proof}
Define a related density $\wh p(x)$ by
\begin{align*}
\int_0^4x^s\wh p(x)\,\d x
&=\wh W(s)=\iint_{[0,1]^2}|1+e^{2\pi i\theta_2}+e^{2\pi i\theta_3}+e^{2\pi i(\theta_2+\theta_3)}|^s\,\d\theta_2\,\d\theta_3
\\
&=W_2(s)^2=\frac{\Gamma(1+s)^2}{\Gamma(1+s/2)^4}.
\end{align*}
By an application of the Mellin transform calculus, we find that, for $0<x<4$,
$$
\wh p(x)=\frac1{2\pi}\cdot{}_2F_1\biggl(\begin{matrix} \tfrac12, \, \tfrac12 \\ 1 \end{matrix}\biggm| 1-\frac{x^2}{16} \biggr).
$$
Then it follows from Lemma~\ref{lem1} that
$$
\wt W'(0)=\int_1^4\wh p(x)\log x\,\d x
=-\int_0^1\wh p(x)\log x\,\d x,
$$
where we use the evaluation
\begin{equation*}
\int_0^4\wh p(x)\log x\,\d x=\m(1+x_2+x_3+x_2x_3)=\m(1+x_2)+\m(1+x_3)=0.
\qedhere
\end{equation*}
\end{proof}

The above proof extends to the general formula
\begin{align*}
\m(1+bx_1+x_2+x_3+x_2x_3)
&=\log b\int_0^b\wh p(x)\,\d x+\int_b^4\wh p(x)\log x\,\d x
\\
&=\frac1{2\pi}\int_0^b{}_2F_1\biggl(\begin{matrix} \tfrac12, \, \tfrac12 \\ 1 \end{matrix}\biggm| 1-\frac{x^2}{16} \biggr)\,\log\frac bx\,\d x
\end{align*}
for $0<b\le4$.
A related computation
$$
\m(1+bx_1+x_2+x_3+x_2x_3)
=\log b+\frac{8}{\pi^2}\int_b^4\frac{\arccos(b/x)\,\log(x/(2\sqrt{b}))}{\sqrt{16-x^2}}\,\d x
$$
valid for $0<b\le4$ was given by J.~Wan~\cite{Wa11}; he also pointed out that
$\m(1+bx_1+x_2+x_3+x_2x_3)=\log b$ for $b>4$ follows from Jensen's formula.

\medskip
The left-hand side of another Mahler measure conjecture \cite{BLRVD03}
$$
\m((1+x_1)^2+x_2+x_3)\overset?=-L'(\tilde f_2;-1)=\frac{72}{\pi^4}\,L(\tilde f_2;3)
=0.7025655062\dots,
$$
where $\tilde f_2(\tau)=\eta(2\tau)\eta(4\tau)\eta(6\tau)\eta(12\tau)$ is a cusp form of level 24,
can be treated by a similar reduction, using that the densities for $(1+x_1)^2$ and $x_2+x_3$ are
$p_2(t^{1/2})/(2t^{1/2})$ on $[0,4]$ and $p_2(t)$ on $[0,2]$, respectively.
The final result is the elegant formula
\begin{equation}
\m((1+x_1)^2+x_2+x_3)=\frac{2G}\pi+\frac2{\pi^2}\int_0^1\arcsin(1-x)\,\arcsin x\,\frac{\d x}x,
\label{eq:L24}
\end{equation}
where $G$ is Catalan's constant,
and, with some further work, we can express the right-hand side hypergeometrically.

\begin{theorem}
\label{th3}
We have
\begin{align*}
\m((1+x_1)^2+x_2+x_3)
&=\frac{8\Gamma(\frac34)^2}{\pi^{5/2}}\,
{}_5F_4\biggl(\begin{matrix} \tfrac14, \, \tfrac14, \, \tfrac14, \, \tfrac34, \, \tfrac34 \\ \frac12, \, \tfrac54, \, \tfrac54, \, \tfrac54 \end{matrix}\biggm| \frac 14 \biggr)
\\ &\qquad
+\frac{\Gamma(\frac14)^2}{54\pi^{5/2}}\,
{}_5F_4\biggl(\begin{matrix} \tfrac34, \, \tfrac34, \, \tfrac34, \, \tfrac54, \, \tfrac54 \\ \frac32, \, \tfrac74, \, \tfrac74, \, \tfrac74 \end{matrix}\biggm| \frac 14 \biggr).
\end{align*}
\end{theorem}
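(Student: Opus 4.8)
The plan is to start from the elegant formula \eqref{eq:L24} and convert its right-hand side into hypergeometric form by expanding the two inverse-sine factors into power series, integrating term by term over $[0,1]$, and resumming. The two structural inputs are the standard expansion $\tfrac{\arcsin x}{x}=\sum_{m\ge0}a_m x^{2m}$ with $a_m=\frac{(1/2)_m}{(2m+1)\,m!}$, equivalently $\tfrac{\arcsin x}{x}={}_2F_1(\tfrac12,\tfrac12;\tfrac32;x^2)$, together with the duplication identity $\arcsin(1-x)=\tfrac\pi2-2\arcsin\sqrt{x/2}$. The latter is the crucial ingredient: because $\arcsin\sqrt{x/2}=\sum_{k\ge0}a_k\,2^{-k-1/2}x^{k+1/2}$ carries \emph{half-integer} powers of $x$, its interaction with the even series of $\arcsin x$ is precisely what will manufacture the quarter-integer parameters and the argument $\tfrac14$ in the final ${}_5F_4$'s.

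First I would substitute $\arcsin(1-x)=\tfrac\pi2-2\arcsin\sqrt{x/2}$ into \eqref{eq:L24}, which splits the integral as $\frac2{\pi^2}\int_0^1\arcsin(1-x)\tfrac{\arcsin x}{x}\,\d x=\frac{\log2}{2}-\frac4{\pi^2}J$, using $\int_0^1\tfrac{\arcsin x}{x}\,\d x=\tfrac\pi2\log2$, where $J=\int_0^1\arcsin\sqrt{x/2}\,\tfrac{\arcsin x}{x}\,\d x$. Expanding $\arcsin\sqrt{x/2}$ and interchanging summation and integration (legitimate on $[0,1]$ by absolute convergence) gives $J=\sum_k a_k\,2^{-k-1/2}\int_0^1 x^{k-1/2}\arcsin x\,\d x$, and a single integration by parts evaluates
\[
\int_0^1 x^{k-1/2}\arcsin x\,\d x=\frac1{k+1/2}\Bigl(\frac\pi2-\frac{\sqrt\pi}2\,\frac{\Gamma(\tfrac k2+\tfrac34)}{\Gamma(\tfrac k2+\tfrac54)}\Bigr),
\]
the Gamma ratio arising from the Beta integral $\int_0^1 x^{k+1/2}(1-x^2)^{-1/2}\,\d x=\tfrac{\sqrt\pi}2\Gamma(\tfrac k2+\tfrac34)/\Gamma(\tfrac k2+\tfrac54)$. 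These Gamma ratios are the seeds of the $\Gamma(\tfrac34)^2$ and $\Gamma(\tfrac14)^2$ prefactors.

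Next I would split the resulting $k$-sum by the parity of $k$. For $k=2j$ the duplication formulas $(1/2)_{2j}=4^j(1/4)_j(3/4)_j$ and $(2j)!=4^j(1/2)_j\,j!$ turn everything into quarter-integer Pochhammers, while the factor $\frac1{(2k+1)^2}=\frac{(1/4)_j^2}{(5/4)_j^2}$ combines with the single $(5/4)_j$ coming from the Gamma ratio to build the cubic denominator $(4j+1)^{-3}$; summing over $j$ yields $\frac{\sqrt\pi}{\sqrt2}\frac{\Gamma(3/4)}{\Gamma(5/4)}\,{}_5F_4(\tfrac14,\tfrac14,\tfrac14,\tfrac34,\tfrac34;\tfrac12,\tfrac54,\tfrac54,\tfrac54;\tfrac14)$. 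Multiplying by $\frac4{\pi^2}$ and using $\Gamma(5/4)=\tfrac14\Gamma(1/4)$ with the reflection $\Gamma(1/4)\Gamma(3/4)=\pi\sqrt2$ converts the prefactor into exactly $\frac{8\Gamma(3/4)^2}{\pi^{5/2}}$. The odd case $k=2j+1$ runs identically, with the Gamma ratio $\Gamma(5/4)/\Gamma(7/4)$ now feeding the $\Gamma(1/4)^2$ prefactor and producing the second ${}_5F_4$ with denominators $(4j+3)^{-3}$.

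The hard part will be showing that no elementary remainder survives. The $\tfrac\pi2$-part of $J$ equals $\pi\int_0^{1/\sqrt2}\tfrac{\arcsin t}{t}\,\d t=\pi\bigl(\tfrac\pi8\log2+\tfrac G2\bigr)$, so that $\frac4{\pi^2}$ times it equals precisely $\frac{2G}\pi+\frac{\log2}{2}$; this cancels the explicit $\frac{2G}\pi$ of \eqref{eq:L24} together with the $\frac{\log2}2$ generated above, leaving only the two ${}_5F_4$'s. Verifying this cancellation — equivalently the closed form $\int_0^{1/\sqrt2}\tfrac{\arcsin t}{t}\,\d t=\tfrac\pi8\log2+\tfrac G2$ — and carefully tracking the Gamma and power prefactors through the parity split is the main obstacle, alongside justifying the termwise integration at the endpoint $x=1$. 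Once the candidate identity is written down, it can alternatively be certified by the differential-equation/creative-telescoping route already invoked for Theorem~\ref{th1}.
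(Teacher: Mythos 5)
Your proposal is correct and follows essentially the same route as the paper's proof: expand $\arcsin(1-x)=\frac\pi2-\sqrt{2x}\,{}_2F_1\bigl(\tfrac12,\tfrac12;\tfrac32;\tfrac x2\bigr)$ (your duplication identity in hypergeometric dress), integrate termwise against $\arcsin x$ via the same moment formula $\int_0^1x^{n-1/2}\arcsin x\,\d x=\frac{\sqrt\pi}{2n+1}\bigl(\sqrt\pi-\Gamma(\tfrac n2+\tfrac34)/\Gamma(\tfrac n2+\tfrac54)\bigr)$, and split by parity to obtain the two ${}_5F_4(\tfrac14)$'s with the stated prefactors. The only (cosmetic) difference is the cancellation of the elementary part: the paper resums it to $\sqrt2\,{}_3F_2\bigl(\tfrac12,\tfrac12,\tfrac12;\tfrac32,\tfrac32;\tfrac12\bigr)$ and cites Adamchik's evaluation $G+\frac\pi4\log2$, whereas you evaluate the equivalent integral $\int_0^{1/\sqrt2}\frac{\arcsin t}{t}\,\d t=\frac G2+\frac\pi8\log2$ directly via the log-sine integral, which is the same classical fact.
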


\begin{proof}
Notice that, for $0<x<1$,
$$
\arcsin(1-x)=\frac\pi2-\arccos(1-x)
=\frac\pi2-\sqrt{2x}\,{}_2F_1\biggl(\begin{matrix} \tfrac12, \, \tfrac12 \\ \frac32 \end{matrix}\biggm| \frac x2 \biggr),
$$
and that, for $n>-1/2$,
$$
\int_0^1x^{n-1/2}\arcsin x\,\d x
=\frac{\sqrt\pi}{2n+1}\biggl(\sqrt\pi-\frac{\Gamma(\frac n2+\frac34)}{\Gamma(\frac n2+\frac54)}\biggr).
$$
Therefore,
\begin{align*}
\int_0^1\arcsin(1-x)\,\arcsin x\,\frac{\d x}x
&=\frac\pi2\int_0^1\arcsin x\,\frac{\d x}x
-\pi\sqrt2\sum_{n=0}^\infty\frac{(\frac12)_n^2}{n!\,(\frac32)_n(2n+1)}\,\frac1{2^n}
\\ &\qquad
+\sqrt{2\pi}\sum_{n=0}^\infty\frac{(\frac12)_n^2\Gamma(\frac n2+\frac34)}{n!\,(\frac32)_n(2n+1)\,\Gamma(\frac n2+\frac54)}\,\frac1{2^n}.
\end{align*}
From this and \eqref{eq:L24} we deduce
\begin{align*}
&
\m((1+x_1)^2+x_2+x_3)=\frac{2G}\pi+\frac{\log2}2
-\frac{2\sqrt2}\pi\,{}_3F_2\biggl(\begin{matrix} \tfrac12, \, \tfrac12, \, \tfrac12 \\ \frac32, \, \tfrac32 \end{matrix}\biggm| \frac 12 \biggr)
\\ &\quad
+\frac{8\sqrt2\,\Gamma(\frac34)}{\pi^{3/2}\Gamma(\frac14)}\,
{}_5F_4\biggl(\begin{matrix} \tfrac14, \, \tfrac14, \, \tfrac14, \, \tfrac34, \, \tfrac34 \\ \frac12, \, \tfrac54, \, \tfrac54, \, \tfrac54 \end{matrix}\biggm| \frac 14 \biggr)
+\frac{\sqrt2\,\Gamma(\frac14)}{54\pi^{3/2}\Gamma(\frac34)}\,
{}_5F_4\biggl(\begin{matrix} \tfrac34, \, \tfrac34, \, \tfrac34, \, \tfrac54, \, \tfrac54 \\ \frac32, \, \tfrac74, \, \tfrac74, \, \tfrac74 \end{matrix}\biggm| \frac 14 \biggr).
\end{align*}
It remains to use
$$
G+\frac14\pi\log2
=\sqrt2\,{}_3F_2\biggl(\begin{matrix} \tfrac12, \, \tfrac12, \, \tfrac12 \\ \frac32, \, \tfrac32 \end{matrix}\biggm|\frac 12 \biggr)
$$
(see \cite[Entry 30]{Ad}) and $\Gamma(\frac14)\Gamma(\frac34)=\pi\sqrt2$.
\end{proof}

\section{Conclusion}
\label{s5}

A goal of this final section is to highlight relevance for and links with other research and open problems.

The (hypergeometric) factorisation in Theorem~\ref{th1} and similar results outlined after its proof are part of
a general phenomenon of arithmetic differential equations of order 4. These are the first instances ``beyond modularity''
in the sense that arithmetic differential equations of order 2 and 3 are always supplied by modular parametrisation.
In order 4, we have to distinguish two particular novel situations (though our knowledge about either is imperfect and incomplete): (the Zariski closure of) the monodromy group is the orthogonal group $O_4\simeq O_{2,2}$
of dimension~6 or the symplectic group $Sp_4$ of dimension~10.
The example given in Theorem~\ref{th1} corresponds to the first (orthogonal) situation:
on the level of Lie groups, $O_{2,2}$ can be realised as the tensor product of two copies of $SL_2$ (or $GL_2$).
There is a limited amount of further examples of this type \cite{RS13,WZ12,Zu14} though we expect that all underlying Picard--Fuchs differential equations
with such monodromy can be represented as tensor products of two arithmetic differential equations of order~2.
There is a natural hypergeometric production of such orthogonal cases using Orr-type formulae (see \cite{Gu15,Wa14})
but there are plenty of other cases coming from classical work of W.\,N.~Bailey and its recent generalisations~\cite{WZ12,Zu18}.
Many such cases, mostly forecast by Sun~\cite{Su14}, are still awaiting their explicit factorisation.
Though these situations do not cover symplectic monodromy instances, they can still be viewed as an intermediate step
between classical modularity and $Sp_4$: the antisymmetric square of the latter happens to be $O_5\simeq O_{3,2}$
(see \cite{ASZ11}).

More in the direction of three-variable Mahler measure, the conjectural evaluation in \eqref{eq:L15} and Theorem~\ref{th2}
brings us to the expectation
\begin{equation}
\frac1{2\pi}\int_0^1{}_2F_1\biggl(\begin{matrix} \tfrac12, \, \tfrac12 \\ 1 \end{matrix}\biggm| 1-\frac{x^2}{16} \biggr)\log x\,\d x\overset?=2L'(f_2;-1).
\label{eq:L15-1}
\end{equation}
This one highly resembles the evaluation
\begin{equation}
\frac12\int_0^1{}_2F_1\biggl(\begin{matrix} \tfrac12, \, \tfrac12 \\ 1 \end{matrix}\biggm| \frac{x^2}{16} \biggr)\,\d x
=\frac12\cdot{}_3F_2\biggl(\begin{matrix} \tfrac12, \, \tfrac12, \, \tfrac12 \\ 1, \, \tfrac32 \end{matrix}\biggm| \frac1{16} \biggr)
=2L'(f_2;0)
\label{eq:L15-0}
\end{equation}
established in~\cite{RZ14}. The related modular parametrisation
$$
x=x(\tau)=16\biggl(\frac{\eta(\tau)\eta(4\tau)^2}{\eta(2\tau)^3}\biggr)^4
$$
corresponds to
\begin{gather*}
1-\frac{x^2}{16}=\biggl(\frac{\eta(\tau)^2\eta(4\tau)}{\eta(2\tau)^3}\biggr)^8,
\\
F\biggl(\frac{x^2}{16}\biggr)=\frac{\eta(2\tau)^{10}}{\eta(\tau)^4\eta(4\tau)^4}
\quad\text{and}\quad
F\biggl(1-\frac{x^2}{16}\biggr)=-2i\tau F\biggl(\frac{x^2}{16}\biggr),
\end{gather*}
where $F$ denotes the corresponding $_2F_1$ hypergeometric series. Note that $x$ ranges from $0$ to $4$
when $\tau$ runs from $i\infty$ to $0$ along the imaginary axis; however, the point $\tau_0=i\,0.8774376613482\dots$,
at which $x(\tau_0)=1$, is not a quadratic irrationality.
Furthermore, H.~Cohen \cite{Co18} observes another step in the ladder \eqref{eq:L15-0}, \eqref{eq:L15-1}:
\begin{align}
\frac6{\pi^2}\int_0^1{}_2F_1\biggl(\begin{matrix} \tfrac12, \, \tfrac12 \\ 1 \end{matrix}\biggm| \frac{x^2}{16} \biggr)\log^2x\,\d x
&\overset?=2L'(f_2;-2)=\frac{3\cdot15^3}{8\pi^6}\,L(f_2;4)
\label{eq:L15-2}
\\
&=1.2165632526\hdots,
\nonumber
\end{align}
though not linked to a particular Mahler measure.

The expression in Theorem~\ref{th3} is somewhat different from the one in Theorem~\ref{th2},
and resembles the hypergeometric evaluation of the $L$-value
\begin{align*}
-L'(\hat f_2;-1)
&=\frac{128}{\pi^4}L(\hat f_2;3)
\\
&=\frac{\Gamma(\frac14)^2}{6\sqrt2\pi^{5/2}}\,
{}_4F_3\biggl(\begin{matrix} 1, \, 1, \, 1, \, \frac12 \\ \frac74, \, \frac32, \, \frac32 \end{matrix}\biggm| 1 \biggr)
+\frac{4\Gamma(\frac34)^2}{\sqrt2\pi^{5/2}}\,
{}_4F_3\biggl(\begin{matrix} 1, \, 1, \, 1, \, \frac12 \\ \frac54, \, \frac32, \, \frac32 \end{matrix}\biggm| 1 \biggr)
\\ &\qquad
+\frac{\Gamma(\frac14)^2}{2\sqrt2\pi^{5/2}}\,
{}_4F_3\biggl(\begin{matrix} 1, \, 1, \, 1, \, \frac12 \\ \frac34, \, \frac32, \, \frac32 \end{matrix}\biggm| 1 \biggr),
\end{align*}
where $\hat f_2(\tau)=\eta(4\tau)^2\eta(8\tau)^2$ is a cusp form of level~32, obtained in~\cite[Theorem 3]{Zu13}.

Finally, we remark that the integral
$$
W_3'(0)=\int_0^3\log x\,\d P_3(x)
=\log3-\int_0^3P_3(x)\,\frac{\d x}x
$$
in the notation of Section~\ref{s2a+}, with $P_3(x)$ related to eta quotients, is visually linked to the following result in \cite{BZ02}
(also discussed in greater generality in \cite{ABYZ02,Ta06})
$$
\int_0^1\frac19\biggl(1-\frac{\eta(\tau)^9}{\eta(3\tau)^3}\biggr)\frac{\d q}q
=\lim_{q\to1^-}\sum_{m,n=1}^\infty\biggl(\frac{-3}n\biggr)\frac nm\,q^{mn}
=L'(\chi_{-3};-1).
$$
However, apart from the fact that the two quantities coincide we could not find a direct link between the two integrals.

\medskip
\noindent
\textbf{Acknowledgements.}
We thank H.~Cohen for supplying us with the numerical observation \eqref{eq:L15-2} whose origin remains completely mysterious to us. We also thank the referee for their valuable feedback.


\end{document}